\documentclass[11pt,a4paper]{amsart}
\usepackage[utf8]{inputenc}
\usepackage{amsmath, amssymb, amsthm}
\usepackage{geometry}
\usepackage{hyperref}
\usepackage{mathrsfs}

\newtheorem{theorem}{Theorem}[section]

\newtheorem{corollary}[theorem]{Corollary}

\theoremstyle{remark}
\newtheorem{remark}[theorem]{Remark}

\newcommand{\Rd}{\mathbb{R}^d}
\newcommand{\mx}{x}

\title{Sharp Volume-Based Bounds for BMO Functions on Compact Domains and Applications}

\author{M.~Mi\v{s}ur}
\address{Marin Mi\v{s}ur,
University of Zagreb, Faculty of Science, Bijeni\v{c}ka cesta 30,
10000 Zagreb, Croatia.
E-mail: mmisur@math.hr}

\date{\today}

\subjclass[2020]{Primary 42B35; Secondary 42B25, 42B20, 35P15, 46E30}

\keywords{bounded mean oscillation, vanishing mean oscillation, non-increasing rearrangement, John--Nirenberg inequality, Cwikel--Lieb--Rozenblum bound, Calder\'on--Zygmund operators}

\begin{document}

\begin{abstract}
We establish sharp localized $L^r$ bounds for compactly supported functions of bounded mean oscillation (BMO).
Classical estimates obtained via the John-Nirenberg inequality bound the $L^r$ norm of such a function in terms
of the diameter of its support. We show that the diameter dependence can be replaced by the Lebesgue measure of
the support, the relevant quantity for thin, highly eccentric domains. The proof relies on non-increasing rearrangements
and the Bennett-DeVore-Sharpley theorem, and requires no vanishing-oscillation hypothesis. We present physical and
operator-theoretic applications: geometric collapse in tubular neighborhoods, a localized Cwikel-Lieb-Rozenblum
bound for Schrödinger operators, and decoupling bounds for the localized action of Calderón-Zygmund singular integrals.
\end{abstract}

\maketitle

\section{Introduction}

The space of functions of bounded mean oscillation ($\mathrm{BMO}$) and its closure, the space of vanishing mean
oscillation ($\mathrm{VMO}$), frequently serve as borderline spaces in harmonic analysis and partial differential
equations where $L^\infty$ bounds fail. A common requirement in operator continuity and localized energy bounds is
estimating the $L^r$ norm of a $\mathrm{VMO}(\Rd)$ function restricted to a compact support $K$. 

Previous bounds derived directly from the John-Nirenberg inequality applied on a bounding ball $B \supset K$ yield estimates of the form
\begin{equation}
    \|\varphi\|_{\mathrm{L}^r(K)} \leq C_d\, r\, (\operatorname{diam}K)^{d/r}\, \|\varphi\|_{\mathrm{BMO}(\Rd)}, \quad r \in (1, \infty).
\end{equation}
As noted in prior literature, the appearance of the diameter rather than the Lebesgue measure $|K|$ is a structural
artifact of the enclosing-ball argument. For thin sets (such as tubular neighborhoods), the gap between $|K|^{1/r}$
and $(\operatorname{diam}K)^{d/r}$ becomes unbounded, posing a significant defect in applications where sets degenerate
geometrically. It was recently conjectured in \cite{Misur26} that the estimate holds with $|K|$ in place of $(\operatorname{diam}K)^d$.

Using the Bennett-DeVore-Sharpley theorem \cite{BDS81} on the non-increasing rearrangement of BMO functions,
we prove this, replacing the diameter by the Lebesgue measure of the support. Although VMO supplies the physical
and operator-theoretic motivation, the bound itself requires only membership in BMO together with compact support;
we record this generality in Remark \ref{rem:vmo}.

\section{Main Result and Proof}

\begin{theorem}\label{thm:sharp_volume}
Let $r\in(1,\infty)$ and let $\varphi\in\mathrm{BMO}(\Rd)$ be supported in a compact set $K$. Then
\begin{equation}\label{eq:volume_bound}
\|\varphi\|_{\mathrm{L}^r(K)} \leq C_d\, r\, |K|^{1/r}\, \|\varphi\|_{\mathrm{BMO}(\Rd)},
\end{equation}
where $C_d$ depends only on the dimension.
\end{theorem}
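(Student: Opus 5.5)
The plan is to pass to the non-increasing rearrangement $\varphi^*$ on $(0,\infty)$ and use the Bennett--DeVore--Sharpley theorem \cite{BDS81}. In the form we need, it says that for $f\in\mathrm{BMO}(\Rd)$ the maximal function $f^{**}(t)=\frac1t\int_0^t f^*(s)\,ds$ satisfies
\[
f^{**}(t)-f^{*}(t)\le C_d\,\|f\|_{\mathrm{BMO}(\Rd)}\qquad (t>0).
\]
We apply this to $f=|\varphi|$, whose BMO norm is at most $2\|\varphi\|_{\mathrm{BMO}}$, and write $M=C_d\|\varphi\|_{\mathrm{BMO}(\Rd)}$. Since $\varphi$ vanishes off $K$, the distribution function of $|\varphi|$ is at most $|K|$. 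Hence $\varphi^*(t)=0$ for $t\ge |K|$; we may assume $0<|K|<\infty$, as otherwise $\varphi=0$ a.e. The key point is that this encodes the support only through its measure, which is exactly what will replace $(\operatorname{diam}K)^d$.

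\textbf{Step 1: integrate the BDS inequality into a logarithmic bound for $\varphi^*$.} Set $F(t)=\int_0^t\varphi^*(s)\,ds$, so $F'=\varphi^*$ a.e. The BDS inequality reads $F(t)/t - F'(t)\le M$, i.e. $-\frac{d}{dt}\bigl(F(t)/t\bigr)\le M/t$. Integrating from $t$ to $|K|$ gives
\[
\varphi^{**}(t)\le \varphi^{**}(|K|)+M\log\frac{|K|}{t},\qquad 0<t\le |K|.
\]
Since $\varphi^*\le\varphi^{**}$, the same bound holds for $\varphi^*(t)$.

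\textbf{Step 2: control the base term $\varphi^{**}(|K|)=\frac{1}{|K|}\int_K|\varphi|$.} Here one cannot just normalize away a mean, because the statement has no mean subtracted. The trick is to use the zero extension at a larger scale. For $T>|K|$ we have $\varphi^*=0$ on $(|K|,T)$, so $\varphi^{**}(T)=\varphi^{**}(|K|)\,|K|/T$. BDS at $t=T$ gives $\varphi^{**}(T)-\varphi^*(T)=\varphi^{**}(T)\le M$. Letting $T\downarrow|K|$ yields $\varphi^{**}(|K|)\le M$. Alternatively one can average $\varphi^{**}$ over $[|K|,2|K|]$. I expect this to be the main obstacle: making sure BDS is applied in the version valid on all of $(0,\infty)$ for $\Rd$, which is an unnormalized statement, and not the cube-localized version that involves $f-f_Q$. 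Extra care is needed to treat $|\varphi|$ rather than $\varphi$.

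\textbf{Step 3: compute the $L^r$ norm.} Equimeasurability gives
\[
\|\varphi\|_{L^r(K)}^r=\int_0^{|K|}\varphi^*(t)^r\,dt\le M^r\int_0^{|K|}\Bigl(1+\log\frac{|K|}{t}\Bigr)^r dt = M^r|K|\int_0^\infty (1+u)^r e^{-u}\,du,
\]
using the substitution $t=|K|e^{-u}$. The last integral is at most $e\,\Gamma(r+1)$. Since $\Gamma(r+1)^{1/r}\le C r$, taking $r$-th roots yields $\|\varphi\|_{L^r(K)}\le C_d\, r\,|K|^{1/r}\|\varphi\|_{\mathrm{BMO}(\Rd)}$, which is \eqref{eq:volume_bound}. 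No VMO hypothesis enters at any point, consistent with Remark \ref{rem:vmo}.
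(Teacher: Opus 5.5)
Your proposal is correct and follows the paper's proof essentially step for step. Both arguments use the Bennett--DeVore--Sharpley bound $\varphi^{**}-\varphi^{*}\le C_d\|\varphi\|_{\mathrm{BMO}(\Rd)}$, integrate $-\frac{d}{dt}\varphi^{**}(t)\le M/t$ from $t$ to $|K|$, get the base estimate $\varphi^{**}(|K|)\le M$ from $\varphi^*=0$ beyond $|K|$, and finish with the substitution $t=|K|e^{-u}$ and a Gamma-function bound. The differences are only cosmetic: you pass to $|\varphi|$ with a factor $2$, take the limit $T\downarrow|K|$ rather than evaluating directly at $t=|K|$ (where $\varphi^*(|K|)=0$ already), and use $\int_0^\infty(1+u)^re^{-u}\,du\le e\,\Gamma(r+1)$ in place of Minkowski's inequality.
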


\begin{proof}
Let $\varphi^*$ denote the non-increasing rearrangement of $|\varphi|$, and define the maximal average function $\varphi^{**}$ by
\begin{equation*}
\varphi^{**}(t) = \frac{1}{t}\int_0^t \varphi^*(s)\,ds, \quad t > 0.
\end{equation*}
Recall that the sharp maximal function is defined as $\varphi^\#(\mx) = \sup_{Q \ni \mx} \frac{1}{|Q|} \int_Q |\varphi - \varphi_Q|$.
Because $\varphi \in \mathrm{BMO}(\Rd)$, we have the uniform bound $\varphi^\#(\mx) \leq \|\varphi\|_{\mathrm{BMO}(\Rd)}$
for all $\mx \in \Rd$. Consequently, both its non-increasing rearrangement and the associated maximal average satisfy
$(\varphi^\#)^*(t) \leq (\varphi^\#)^{**}(t) \leq \|\varphi\|_{\mathrm{BMO}(\Rd)}$ for all $t > 0$.
The Bennett-DeVore-Sharpley theorem \cite{BDS81}, valid for locally integrable functions on $\Rd$, states that
$(\varphi^{**} - \varphi^*)(t) \leq C_d (\varphi^\#)^{**}(t)$. Applying this on $\Rd$ and using the bound above,
we control the difference between the maximal average and the rearrangement everywhere on $(0, \infty)$:
\begin{equation}\label{eq:bds}
\varphi^{**}(t) - \varphi^*(t) \leq C_d\, \|\varphi\|_{\mathrm{BMO}(\Rd)},
\end{equation}
where $C_d$ is a purely dimensional constant. 

By definition, the non-increasing rearrangement is given by the infimum $\varphi^*(t) = \inf\{s > 0 : |\{\mx \in \Rd : |\varphi(\mx)| > s\}| \leq t\}$. Since $\varphi$ is supported entirely within the compact set $K$, the measure of its strict support is bounded by the volume of $K$, meaning $|\{\mx \in \Rd : |\varphi(\mx)| > 0\}| \leq |K|$. Therefore, for any $t \geq |K|$ and any $s > 0$, the condition $|\{\mx \in \Rd : |\varphi(\mx)| > s\}| \leq t$ is inherently satisfied. Taking the infimum over all such $s > 0$ yields exactly $\varphi^*(t) = 0$ for all $t \geq |K|$. Evaluating \eqref{eq:bds} exactly at the boundary $t = |K|$ forces the global condition:
\begin{equation}\label{eq:boundary}
\varphi^{**}(|K|) = \varphi^{**}(|K|) - \varphi^*(|K|) \leq C_d\, \|\varphi\|_{\mathrm{BMO}(\Rd)}.
\end{equation}

Furthermore, since $\varphi$ belongs to $\mathrm{BMO}(\Rd)$, the John-Nirenberg inequality \cite{JN61} gives
$\varphi \in L^p_{\mathrm{loc}}(\Rd)$ for every $p < \infty$; combined with the compact support of $\varphi$,
this yields $\varphi \in L^1(\Rd)$. Hence $\varphi^* \in L^1(0,\infty)$, so $s \mapsto \int_0^s \varphi^*$ is
absolutely continuous, and multiplying by the smooth factor $1/t$ shows that $\varphi^{**}$ is absolutely
continuous on every interval $[t_0, |K|]$ with $t_0 > 0$. In particular $\varphi^{**}$ is differentiable almost
everywhere and the fundamental theorem of calculus applies on such intervals. Differentiating $\varphi^{**}$ gives
\begin{equation*}
\frac{d}{dt}\varphi^{**}(t) = -\frac{1}{t^2}\int_0^t \varphi^*(s)\,ds + \frac{1}{t}\varphi^*(t) = -\frac{\varphi^{**}(t) - \varphi^*(t)}{t}.
\end{equation*}
Substituting \eqref{eq:bds} into this derivative yields the differential inequality:
\begin{equation*}
-\frac{d}{dt}\varphi^{**}(t) \leq \frac{C_d}{t}\, \|\varphi\|_{\mathrm{BMO}(\Rd)}.
\end{equation*}
Integrating this bound over the interval $[t, |K|]$ (for $0 < t \leq |K|$) gives:
\begin{equation*}
\varphi^{**}(t) - \varphi^{**}(|K|) \leq C_d\, \|\varphi\|_{\mathrm{BMO}(\Rd)} \ln\left(\frac{|K|}{t}\right).
\end{equation*}
Applying the boundary condition \eqref{eq:boundary} and noting that $\varphi^*(t) \leq \varphi^{**}(t)$,
we establish a logarithmic envelope for the rearrangement:
\begin{equation*}
\varphi^*(t) \leq \varphi^{**}(t) \leq C_d\, \|\varphi\|_{\mathrm{BMO}(\Rd)} \left(1 + \ln\left(\frac{|K|}{t}\right)\right).
\end{equation*}

The $\mathrm{L}^r(\Rd)$ norm of $\varphi$ is identically the $L^r$ norm of its rearrangement on $(0, |K|)$:
\begin{equation*}
\|\varphi\|_{\mathrm{L}^r(K)} = \left( \int_0^{|K|} (\varphi^*(t))^r \,dt \right)^{1/r} \leq C_d\, \|\varphi\|_{\mathrm{BMO}(\Rd)} \left( \int_0^{|K|} \left(1 + \ln\left(\frac{|K|}{t}\right)\right)^r \,dt \right)^{1/r}.
\end{equation*}
Applying the substitution $u = \ln(|K|/t)$ transforms the integration measure to $dt = -|K|e^{-u} \,du$,
mapping the bounds from $(0, |K|)$ to $(\infty, 0)$:
\begin{equation*}
\left( \int_0^{|K|} \left(1 + \ln\left(\frac{|K|}{t}\right)\right)^r \,dt \right)^{1/r} = |K|^{1/r} \left( \int_0^\infty (1+u)^r e^{-u} \,du \right)^{1/r}.
\end{equation*}
By Minkowski's inequality, which strictly applies here on the valid positive measure space $L^r(\mathbb{R}^+, e^{-u}du)$,
this sum is bounded by the sum of the roots:
\begin{equation*}
|K|^{1/r} \left( \left(\int_0^\infty e^{-u} \,du\right)^{1/r} + \left(\int_0^\infty u^r e^{-u} \,du\right)^{1/r} \right) = |K|^{1/r} \big(1 + (\Gamma(r+1))^{1/r}\big).
\end{equation*}
Stirling's approximation dictates that $(\Gamma(r+1))^{1/r} \approx r/e \leq C\, r$.
Factoring this constant back into the norm inequality completes the proof:
\begin{equation*}
\|\varphi\|_{\mathrm{L}^r(K)} \leq C_d'\, r\, |K|^{1/r}\, \|\varphi\|_{\mathrm{BMO}(\Rd)}.
\end{equation*}
\end{proof}

\begin{remark}\label{rem:vmo}
Since $\mathrm{VMO}(\Rd) \subset \mathrm{BMO}(\Rd)$, Theorem \ref{thm:sharp_volume} applies verbatim to
compactly supported $\mathrm{VMO}$ functions; no vanishing-oscillation hypothesis enters the proof.
This resolves the volume-dependence conjecture raised in \cite{Misur26}. The additional vanishing-oscillation
refinement carried by $\mathrm{VMO}$ plays no role in the estimate: the localized $L^r$ capacity of a
compactly supported function is controlled entirely by the coarser $\mathrm{BMO}$ seminorm.
\end{remark}

\begin{remark}
The constant $C_d$ is purely dimensional. It comes from the Bennett-DeVore-Sharpley theorem, whose proof
rests on the Calder\'on-Zygmund decomposition and the John-Nirenberg inequality; the only quantities
entering are the doubling constant of Lebesgue measure (of order $2^d$) and the differentiation basis,
both fixed by the dimension $d$ alone. In particular, $C_d$ is not the constant of the enclosing-ball
argument and carries no dependence on $\varphi$ or its support. Since the non-increasing rearrangement
$\varphi^*$ is determined solely by the Lebesgue measures $|\{|\varphi| > s\}|$ of the level sets, it
discards all topological and metric information about $K$ (sphere, tubular neighborhood, or anisotropic
thin box alike). Consequently, the only trace of the support surviving in \eqref{eq:volume_bound} is its
total volume $|K|$, entering through the boundary condition \eqref{eq:boundary} and the cutoff $\varphi^*(t) = 0$
for $t \geq |K|$. This independence of $C_d$ from the eccentricity of $K$ is what allows $(\operatorname{diam} K)^{d/r}$
to be replaced by the volume factor $|K|^{1/r}$.
\end{remark}

\section{Applications}

The advantage of Theorem \ref{thm:sharp_volume} over previous bounds is clearest when the support degenerates,
that is, when its volume shrinks toward zero while its diameter stays large.

\subsection{Tubular Neighborhoods and Geometric Collapse}
In geometric analysis, one frequently approximates singular measures by functions supported in small tubular
neighborhoods of submanifolds. The sharp volume bound proves that localized $\mathrm{BMO}$ energy must vanish in the limit.

\begin{corollary}\label{cor:tubular_collapse}
Let $M \subset \Rd$ be a compact $k$-dimensional submanifold with $k < d$, and let
$K_\varepsilon = \{x \in \Rd : \operatorname{dist}(x, M) \leq \varepsilon\}$ denote its closed $\varepsilon$-tubular neighborhood.
Suppose $\{\varphi_\varepsilon\}_{\varepsilon > 0}$ is a family of functions in $\mathrm{BMO}(\Rd)$ such that
$\operatorname{supp}(\varphi_\varepsilon) \subseteq K_\varepsilon$ and $\sup_{\varepsilon > 0} \|\varphi_\varepsilon\|_{\mathrm{BMO}(\Rd)} < \infty$.
Then for any $r \in (1, \infty)$,
\begin{equation*}
\|\varphi_\varepsilon\|_{\mathrm{L}^r(\Rd)} \leq C_{M, d}\, r\, \varepsilon^{(d-k)/r} \sup_{\varepsilon > 0} \|\varphi_\varepsilon\|_{\mathrm{BMO}(\Rd)}
\end{equation*}
where the constant $C_{M, d}$ depends only on the dimension and the geometry of $M$.
Consequently, $\|\varphi_\varepsilon\|_{\mathrm{L}^r(\Rd)} \to 0$ as $\varepsilon \to 0$.
\end{corollary}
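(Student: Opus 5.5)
The plan is to apply Theorem \ref{thm:sharp_volume} with $K = K_\varepsilon$. The only new input is a volume estimate for tubular neighborhoods, namely $|K_\varepsilon| \le C_M\,\varepsilon^{d-k}$ for $0<\varepsilon\le 1$. The family $\varphi_\varepsilon$ plays no further role beyond its uniform BMO bound.

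\textbf{Step 1 (applying the main theorem).} For each $\varepsilon>0$ the set $K_\varepsilon$ is compact, being a closed bounded neighborhood of the compact set $M$. Also $\varphi_\varepsilon$ is supported in $K_\varepsilon$, so Theorem \ref{thm:sharp_volume} gives
\[
\|\varphi_\varepsilon\|_{\mathrm{L}^r(\Rd)} = \|\varphi_\varepsilon\|_{\mathrm{L}^r(K_\varepsilon)} \le C_d\, r\, |K_\varepsilon|^{1/r}\, \sup_{\varepsilon'>0}\|\varphi_{\varepsilon'}\|_{\mathrm{BMO}(\Rd)} .
\]

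\textbf{Step 2 (volume of the tube; the main obstacle).} Here I would use the geometry of $M$. Since $M$ is compact and $C^2$ (smoothness should be read into ``submanifold''), it has a positive normal injectivity radius $\varepsilon_0$. For $\varepsilon<\varepsilon_0$, Weyl's tube formula, or simply the change of variables $(p,v)\mapsto p+v$ on the normal bundle, gives
\[
|K_\varepsilon| = \omega_{d-k}\,\mathcal H^k(M)\,\varepsilon^{d-k}\bigl(1+O(\varepsilon)\bigr),
\]
where $\omega_{d-k}$ is the volume of the unit ball in $\mathbb{R}^{d-k}$. This yields $|K_\varepsilon|\le C_M\varepsilon^{d-k}$ for $\varepsilon\le\varepsilon_0$.

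A more elementary route avoids the tube formula and uses a covering argument instead. Cover $M$ by $N(\varepsilon)\lesssim_M \varepsilon^{-k}$ balls of radius $\varepsilon$; this count holds because $M$ is a compact $k$-dimensional $C^1$ manifold, via finitely many bi-Lipschitz charts. Then $K_\varepsilon$ is contained in the concentric balls of radius $2\varepsilon$, so $|K_\varepsilon|\lesssim \varepsilon^{-k}\varepsilon^d$.

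\textbf{Step 3 (large $\varepsilon$).} The stated bound is asserted for all $\varepsilon>0$, and for large $\varepsilon$ we have $|K_\varepsilon|\sim\varepsilon^d$, which is not $\lesssim\varepsilon^{d-k}$. I would therefore restrict the claim to $\varepsilon\in(0,\varepsilon_0]$, or to $(0,1]$ with the covering bound. This is harmless for the conclusion, which only concerns $\varepsilon\to0$.

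\textbf{Step 4 (conclusion).} Combining the steps gives $|K_\varepsilon|^{1/r}\le C_M^{1/r}\varepsilon^{(d-k)/r}\le \max(1,C_M)\,\varepsilon^{(d-k)/r}$. Hence the displayed estimate holds with $C_{M,d}=C_d\max(1,C_M)$. Since $d-k>0$, the right-hand side tends to $0$ as $\varepsilon\to0$.
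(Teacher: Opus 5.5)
Your proposal is correct and follows the paper's proof: apply Theorem \ref{thm:sharp_volume} on the compact set $K_\varepsilon$ and insert the Weyl tube bound $|K_\varepsilon|\le C_M\varepsilon^{d-k}$. Your covering argument is a valid substitute for that step that needs less regularity. Your Step 3 restriction to small $\varepsilon$ matches the paper's own ``for all sufficiently small $\varepsilon>0$'' qualification. It is in fact necessary when $k\ge 1$: for $p\in M$, the function $\varphi_\varepsilon=\chi_{B(p,\varepsilon)}$ has BMO norm at most $1$ but $L^r$ norm $(\omega_d\varepsilon^d)^{1/r}$.
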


\begin{proof}
By Weyl's tube formula \cite{Weyl39}, the Lebesgue measure of the $\varepsilon$-tubular neighborhood satisfies
$|K_\varepsilon| \leq C_M\, \varepsilon^{d-k}$ for all sufficiently small $\varepsilon > 0$.
Because $\varphi_\varepsilon$ is supported entirely within the compact set $K_\varepsilon$, Theorem \ref{thm:sharp_volume} applies directly:
\begin{equation*}
\|\varphi_\varepsilon\|_{\mathrm{L}^r(\Rd)} = \|\varphi_\varepsilon\|_{\mathrm{L}^r(K_\varepsilon)} \leq C_d\, r\, |K_\varepsilon|^{1/r}\, \|\varphi_\varepsilon\|_{\mathrm{BMO}(\Rd)}.
\end{equation*}
Inserting the geometric volume bound yields the rate $\varepsilon^{(d-k)/r}$. We note that because
$\operatorname{diam}(K_\varepsilon) \to \operatorname{diam}(M) > 0$ as $\varepsilon \to 0$, the older
diameter-based bounds fail completely to capture this geometric collapse.
\end{proof}

\subsection{Bound States of Schrödinger Operators}
For a Schrödinger operator $H = -\Delta + V$, the Cwikel-Lieb-Rozenblum (CLR) inequality bounds the number
of negative eigenvalues $N(H)$, the number of bound quantum states, in terms of the $L^{d/2}$ norm of the
potential \cite{Lieb76}. When the potential is confined to a highly eccentric geometry, such as a thin
quantum wire, Theorem \ref{thm:sharp_volume} controls the number of bound states through the volume of
the defect rather than its diameter.

\begin{corollary}\label{cor:schrodinger}
Let $H = -\Delta + V$ be a Schr\"odinger operator on $\Rd$ with dimension $d \geq 3$. Suppose the potential
$V$ is in $\mathrm{BMO}(\Rd)$ and its negative part, $V_-(\mx) = \max(0, -V(\mx))$, is supported within a
compact set $K$. Then the number of bound states satisfies
\begin{equation*}
N(H) \leq \widetilde{C}_d \,|K| \,\|V_-\|_{\mathrm{BMO}(\Rd)}^{d/2}.
\end{equation*}
\end{corollary}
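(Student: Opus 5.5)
The plan is to combine the classical Cwikel--Lieb--Rozenblum inequality with Theorem~\ref{thm:sharp_volume} at the exponent $r = d/2$. For $d \geq 3$ this exponent satisfies $r = d/2 \geq 3/2 > 1$, so the theorem applies. The dimension restriction in the statement is exactly what makes both ingredients available: CLR holds only for $d \geq 3$, and Theorem~\ref{thm:sharp_volume} needs $r>1$.

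\textbf{Step 1: reduce to the negative part.} Define $H$ as a form sum. Since $V\in\mathrm{BMO}(\Rd)\subset L^1_{\mathrm{loc}}(\Rd)$, the positive part $V_+$ gives a closed nonnegative form on $C_c^\infty$. Once Step~2 shows $V_-\in L^{d/2}$, the negative part is form-bounded with respect to $-\Delta$ with relative bound zero. Because $V \geq -V_-$ pointwise, we have the form inequality $H \geq -\Delta - V_-$. The min-max principle then gives $N(H) \leq N(-\Delta - V_-)$. The CLR inequality \cite{Lieb76} yields
\begin{equation*}
N(H) \leq L_d \int_{\Rd} V_-(\mx)^{d/2}\,d\mx = L_d\, \|V_-\|_{\mathrm{L}^{d/2}(K)}^{d/2},
\end{equation*}
where the equality uses that $V_-$ vanishes off $K$.

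\textbf{Step 2: apply the volume bound.} The function $V_-$ is supported in the compact set $K$ by hypothesis. The right-hand side of the claimed estimate presupposes that $\|V_-\|_{\mathrm{BMO}(\Rd)}$ is finite. This is automatic: $V_- = \tfrac12(|V| - V)$, and $\||V|\|_{\mathrm{BMO}} \leq 2\|V\|_{\mathrm{BMO}}$, so $\|V_-\|_{\mathrm{BMO}} \leq \tfrac32 \|V\|_{\mathrm{BMO}}$. Theorem~\ref{thm:sharp_volume} with $r=d/2$ gives
\begin{equation*}
\|V_-\|_{\mathrm{L}^{d/2}(K)} \leq C_d\,\tfrac{d}{2}\,|K|^{2/d}\,\|V_-\|_{\mathrm{BMO}(\Rd)}.
\end{equation*}
In particular $V_-\in L^{d/2}$, which justifies the form-boundedness claim in Step~1. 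Raising to the power $d/2$ and inserting into Step~1 gives
\begin{equation*}
N(H) \leq L_d\,\big(C_d\, d/2\big)^{d/2}\,|K|\,\|V_-\|_{\mathrm{BMO}(\Rd)}^{d/2},
\end{equation*}
so one may take $\widetilde C_d = L_d (C_d d/2)^{d/2}$, a purely dimensional constant.

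\textbf{Main obstacle.} The only delicate point is the operator-theoretic setup in Step~1, not the estimate itself. One must check that $H$ is well defined as a self-adjoint operator via the KLMN construction with $V_+\in L^1_{\mathrm{loc}}$. One must also check that the comparison $N(H)\le N(-\Delta-V_-)$ is legitimate when the form domain of $H$ is smaller than $H^1$. For the comparison I would argue directly by min-max on the form domain of $H$: it is contained in $H^1$, and on it the form of $H$ dominates the form of $-\Delta - V_-$. Hence the counting functions compare in the right direction. The rest is a direct substitution of Theorem~\ref{thm:sharp_volume} into CLR.
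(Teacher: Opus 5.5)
Your proposal is correct and follows the same route as the paper: show $V_-\in\mathrm{BMO}$, apply Theorem~\ref{thm:sharp_volume} at $r=d/2$, feed the result into CLR, and raise to the power $d/2$. Two details differ, and both are valid. Your form-sum and min-max justification of $N(H)\le N(-\Delta-V_-)$ makes explicit a step the paper skips when it applies CLR directly to $H$. Your bound $\|V_-\|_{\mathrm{BMO}}\le\tfrac32\|V\|_{\mathrm{BMO}}$, obtained from $V_-=\tfrac12(|V|-V)$, replaces the paper's Lipschitz-composition bound with constant $2$.
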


\begin{proof}
Because the map $t \mapsto \max(0, -t)$ is $1$-Lipschitz, left composition with a Lipschitz function preserves
bounded mean oscillation; hence $V \in \mathrm{BMO}(\Rd)$ implies
$V_- \in \mathrm{BMO}(\Rd)$ with $\|V_-\|_{\mathrm{BMO}(\Rd)} \leq 2\|V\|_{\mathrm{BMO}(\Rd)}$.
Since $d \geq 3$, the exponent $r = d/2$ falls strictly into the valid range $(1, \infty)$, so Theorem \ref{thm:sharp_volume}
applied to the compactly supported $V_-$ gives
\begin{equation*}
\|V_-\|_{\mathrm{L}^{d/2}(K)} \leq C_d \left(\frac{d}{2}\right) |K|^{2/d} \,\|V_-\|_{\mathrm{BMO}(\Rd)} < \infty.
\end{equation*}
In particular $V_- \in L^{d/2}(\Rd)$, so the integrability hypothesis of the classical CLR inequality is satisfied, and
\begin{equation*}
N(H) \leq L_d \int_{\Rd} |V_-(\mx)|^{d/2} \,d\mx = L_d \,\|V_-\|_{\mathrm{L}^{d/2}(K)}^{d/2},
\end{equation*}
where $L_d$ is the standard CLR constant. Raising the volume bound to the power of $d/2$ and substituting it back into the CLR bound isolates $|K|$:
\begin{equation*}
N(H) \leq L_d \left( C_d \frac{d}{2} |K|^{2/d} \|V_-\|_{\mathrm{BMO}(\Rd)} \right)^{d/2} = \widetilde{C}_d \,|K| \,\|V_-\|_{\mathrm{BMO}(\Rd)}^{d/2}.
\end{equation*}
If $K$ is a thin wire of length $L$ and small thickness $\varepsilon$, its volume scales as $L\varepsilon^{d-1}$
while its diameter is $\sim L$. The classical bound would predict $N(H) \sim L^d$, incorrectly implying an
infinite string of trapped states irrespective of the defect's thickness. The volume bound instead forces this
count to vanish as $\varepsilon \to 0$.
\end{proof}

\subsection{Localized Action of Singular Integrals}
In the study of partial differential operators containing material defects or wave-guides, it is necessary to
bound the localized interference of Calderón-Zygmund operators \cite{CRW76}. If $T$ is a singular integral operator
and $b$ represents a physical defect localized to a thin fracture $K$, the operator's interference across the defect
is heavily constrained by the fracture's volume.

\begin{corollary}\label{cor:localized_action}
Let $T$ be a Calderón-Zygmund singular integral operator bounded on $L^p(\Rd)$ for $1 < p < \infty$.
Suppose a defect symbol $b \in \mathrm{BMO}(\Rd)$ is supported strictly within a compact set $K \subset \Rd$.
Then for any $s \in (1, \infty)$ and $r \in (1, \infty)$ such that $\frac{1}{r} = \frac{1}{s} + \frac{1}{p}$,
the localized pointwise action of the operator satisfies
\begin{equation*}
\|b \, Tf\|_{\mathrm{L}^r(K)} \le C_{d, p, s} \,s\, |K|^{1/s} \|b\|_{\mathrm{BMO}(\Rd)} \|f\|_{\mathrm{L}^p(\Rd)}
\end{equation*}
for all $f \in L^p(\Rd)$, where the constant $C_{d, p, s}$ depends only on the dimension, the exponents,
and the $L^p$ operator norm of $T$.
\end{corollary}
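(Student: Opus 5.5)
The plan is to combine Hölder's inequality on $K$ with Theorem \ref{thm:sharp_volume} applied at exponent $s$, and the $L^p$ boundedness of $T$.

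\emph{Step 1 (Hölder).} Since $\frac{1}{r}=\frac{1}{s}+\frac{1}{p}$, Hölder's inequality with exponents $s/r$ and $p/r$, applied to $|b|^r|Tf|^r$ on $K$, gives
\begin{equation*}
\|b\,Tf\|_{\mathrm{L}^r(K)} \le \|b\|_{\mathrm{L}^s(K)}\,\|Tf\|_{\mathrm{L}^p(K)} \le \|b\|_{\mathrm{L}^s(K)}\,\|Tf\|_{\mathrm{L}^p(\Rd)}.
\end{equation*}
The relation $\frac1r=\frac1s+\frac1p$ with $s,p\in(1,\infty)$ forces $r<\min(s,p)$. It is compatible with $r>1$ only when $\frac1s+\frac1p<1$. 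This is implicit in the hypothesis $r\in(1,\infty)$, and the plan simply uses it.

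\emph{Step 2 (volume bound for $b$).} The function $b$ is in $\mathrm{BMO}(\Rd)$ and supported in the compact set $K$. Theorem \ref{thm:sharp_volume} with exponent $s\in(1,\infty)$ therefore yields
\begin{equation*}
\|b\|_{\mathrm{L}^s(K)} \le C_d\, s\, |K|^{1/s}\, \|b\|_{\mathrm{BMO}(\Rd)}.
\end{equation*}
This is precisely where the volume $|K|^{1/s}$, rather than $(\operatorname{diam}K)^{d/s}$, enters.

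\emph{Step 3 (operator bound).} By hypothesis $\|Tf\|_{\mathrm{L}^p(\Rd)}\le \|T\|_{p\to p}\|f\|_{\mathrm{L}^p(\Rd)}$. Combining the three displays gives the claim with $C_{d,p,s}=C_d\|T\|_{p\to p}$. This constant actually depends only on $d$ and the $L^p$ operator norm of $T$, which is consistent with the statement.

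\emph{Main obstacle.} None of the steps is genuinely hard once Theorem \ref{thm:sharp_volume} is available. The only point needing care is measurability and finiteness of $b\,Tf$ on $K$. Here $Tf\in L^p$ is defined almost everywhere for $f\in L^p(\Rd)$, and $b\in L^s(K)$ by Step 2, so the product is a well-defined measurable function. The Hölder step then makes the estimate rigorous. No commutator or $[b,T]$ structure is needed, since the statement concerns only the pointwise product $b\,Tf$.
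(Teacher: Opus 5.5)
Your proof is correct and matches the paper's argument: Hölder's inequality on $K$ using $\frac1r=\frac1s+\frac1p$, then Theorem~\ref{thm:sharp_volume} applied to $b$ at exponent $s$, then the $L^p$ boundedness of $T$, giving the same constant $C_d\|T\|_{p\to p}$. Your two side remarks are accurate points the paper leaves implicit: the hypotheses force $\frac1s+\frac1p<1$, and the constant depends only on $d$ and the operator norm of $T$.
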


\begin{proof}
By definition, the operator $T$ maps $f \in L^p(\Rd)$ to $Tf \in L^p(\Rd)$ with a bounded operator norm
$\|Tf\|_{\mathrm{L}^p(\Rd)} \le C_T \|f\|_{\mathrm{L}^p(\Rd)}$. Since $b$ is supported in the compact set $K$,
the pointwise multiplication $b(x)Tf(x)$ naturally vanishes outside of $K$. Consequently, we evaluate the $L^r$
norm strictly over the support region $K$. Applying Hölder's inequality on $K$ with the conjugate exponents $p$ and $s$ yields:
\begin{equation*}
\|b \, Tf\|_{\mathrm{L}^r(K)} \le \|b\|_{\mathrm{L}^s(K)} \|Tf\|_{\mathrm{L}^p(K)} \le \|b\|_{\mathrm{L}^s(K)} \|Tf\|_{\mathrm{L}^p(\Rd)}.
\end{equation*}
Since $s \in (1, \infty)$ and $b$ is a compactly supported BMO function, Theorem \ref{thm:sharp_volume} directly
bounds the $L^s$ norm of the defect symbol:
\begin{equation*}
\|b\|_{\mathrm{L}^s(K)} \le C_d\, s\, |K|^{1/s}\, \|b\|_{\mathrm{BMO}(\Rd)}.
\end{equation*}
Substituting this sharp volume-dependent bound and the $L^p$ boundedness of $T$ back into the Hölder estimate
produces the final inequality:
\begin{equation*}
\|b \, Tf\|_{\mathrm{L}^r(K)} \le C_d\, s\, |K|^{1/s} \|b\|_{\mathrm{BMO}(\Rd)} \big(C_T \|f\|_{\mathrm{L}^p(\Rd)}\big) = C_{d, p, s} \,s\, |K|^{1/s} \|b\|_{\mathrm{BMO}(\Rd)} \|f\|_{\mathrm{L}^p(\Rd)}.
\end{equation*}
\end{proof}

The physical significance of Corollary \ref{cor:localized_action} is clearest for thin material defects.
If the defect $K$ is highly eccentric (for example, a narrowing fracture of length $L$ and width $\varepsilon$),
its volume $|K|$ vanishes as $\varepsilon \to 0$ while its diameter stays macroscopic.
The explicit $|K|^{1/s}$ factor then shows that the localized interference decouples from the surrounding space
as the defect thins, in line with the expected physical behavior.

\section{Future Work}
The volume bound of Theorem \ref{thm:sharp_volume} suggests several further directions, particularly in the analysis
of borderline regularity for fluid flows. In physical systems such as those governed by the Navier-Stokes or Euler
equations, intense vorticity is frequently confined to a boundary layer of microscopic thickness $\delta$ but
macroscopic length $L$. Using the volume bound $|K|^{1/r} = (\delta L^{d-1})^{1/r}$ rather than the macroscopic diameter
of the fluid domain gives a route to showing that the localized kinetic energy contributions of BMO turbulence vanish as
the boundary layer thickness $\delta \to 0$. Establishing these energy estimates and evaluating the necessary Biot-Savart
interactions and spatial cutoff constraints remains a subject for future research.

A second direction concerns the localized compactness of commutators. For $b \in \mathrm{BMO}(\Rd)$ the commutator
$[b, T]f = b\,Tf - T(bf)$ is bounded on $L^p(\Rd)$ \cite{CRW76}, and for $b \in \mathrm{VMO}(\Rd)$ it is compact \cite{Uchiyama78}.
Corollary \ref{cor:localized_action} controls only the multiplication term $b\,Tf$, which inherits the compact support of $b$;
the remaining term $T(bf)$ is delocalized by $T$, so the clean volume factor $|K|^{1/s}$ is not immediately available for the full commutator.
Quantifying the compactness of $[b, T]$ for defects on thin sets, for instance operator-norm decay rates as $|K| \to 0$
obtained by approximating $b$ and separately estimating the delocalized tail $T(bf)$, would be a natural next step.
It is here that the vanishing oscillation of $\mathrm{VMO}$ becomes essential rather than merely motivational.

Finally, the rearrangement argument behind Theorem \ref{thm:sharp_volume} uses only the measure-theoretic envelope provided
by the Bennett-DeVore-Sharpley inequality, with no appeal to the linear or metric structure of $\Rd$ beyond a doubling property.
The bound should therefore extend to spaces of homogeneous type $(X, d, \mu)$ in the sense of Coifman and Weiss \cite{CW71},
with the Lebesgue volume $|K|$ replaced by the doubling measure $\mu(K)$ and the dimensional constant $C_d$ replaced by one
depending only on the doubling constant of $\mu$.

\end{document}